\documentclass[final,3p,times]{elsarticle}
\usepackage{graphics}
\usepackage[utf8]{inputenc} 
\usepackage{latexsym,amssymb,amsthm,amsmath,amsfonts}
\journal{define}
\usepackage{url}
  \newtheorem{theorem}{Theorem}
  \newtheorem{proposition}{Proposition}
               \newtheorem{corollary}{Corollary}
  \newtheorem{example}{Example}

               \def\pf{\par\noindent {\em Proof.}~\par\noindent}
               \def\qed{~\hfill{$\square$}\pagebreak[1]\par\medskip\par}

\newcommand{\Li}{{\mbox{Lip}}}

\newcommand{\R}{{\mathbb R}}

\newcommand{\dist}{{\rm dist}}

\newcommand{\ux}{\underline{x}}

\newcommand{\uy}{\underline{y}}

\newcommand{\pux}{\partial_{\ux}}
\newcommand{\puy}{\partial_{\uy}}
\newcommand{\pD}{{^\psi\!\pux}}
\newcommand{\pDy}{{^\psi\!\puy}}
\newcommand{\hD}{{^\varphi\!\pux}}

\newcommand{\La}{{^{\varphi,\psi}\!\mathcal{L}_{\underline{x}}}}
\newcommand{\Lay}{{^{\varphi,\psi}\!\mathcal{L}_{\underline{y}}}}

\newcommand{\bj}{{\bf j}}

\hyphenation{e-las-ti-ci-ty hy-per-ho-lo-mor-phic cha-rac-te-ri-zing ge-ne-ral pro-per-ties ge-ne-ra-li-zed cons-tant boun-da-ry ge-ne-ra-li-za-tion o-pe-ra-tors co-rres-pon-ding pa-ra-me-ters di-ffe-ren-tial}
\begin{document}

\begin{frontmatter}

\title{Jump problem for generalized Lam\'e-Navier systems in $\R^m$}

\author{Daniel Alfonso Santiesteban$^{(1)}$, Ricardo Abreu Blaya$^{(2)}$ and Daniel Alpay$^{(3)}$}

\address{
$^{1,2}$Facultad de Matem\'aticas, Universidad Aut\'onoma de Guerrero, Mexico\\$^{3}$ Schmid College of Science and Technology, Chapman University, USA}
\ead{danielalfonso950105@gmail.com,  rabreublaya@yahoo.es, alpay@chapman.edu}

\begin{abstract}
This paper is devoted to study a fundamental system of equations in Linear Elasticity Theory: the famous Lam\'e-Navier system. The Clifford algebra language allows us to rewrite this system in terms of the Euclidean Dirac operator, which at the same time suggests a very natural generalization involving the so-called structural sets. Our interest lies mainly in the jump problem for these elastic systems. A generalized Teodorescu transform, to be introduced here, provides the means for obtaining the explicit solution of the jump problem for a very wide classes of regions, including those with a fractal boundary. 
 \end{abstract}

\begin{keyword}
Clifford analysis, Lam\'e-Navier system, structural set, jump problem, fractal domain.\\
\textit{2020 Mathematics Subject Classification:} 30G35; 30G30, 35J15, 35J25, 35J47, 35J57.
\end{keyword}
\end{frontmatter}
\section{Introduction}

Clifford analysis focuses on the study of the null solutions of the Dirac operator 
\begin{equation}\label{Diracoperator}
\pux=\sum_{j=1}^me_j\partial_{x_j}
\end{equation}
in an open region of $\R^m$. These solutions are known as monogenic functions and represent genuine generalizations of holomorphic functions of the complex plane. Here $(e_1,...,e_m)$ is an orthonormal basis in $\R^m$ underlying the construction of the $2^m$-dimensional universal real Clifford algebra $\R_{0,m}$, and $\ux=\sum_{j=1}^mx_je_j$ is a vector variable defined in $\R^m$. For an in-depth study about Clifford analysis, including historical notes, we refer to e.g. \cite{BDS,DSS,Delang,DK,GHS,Ryan,Ryan1}. 

Clifford algebras have essential applications in various fields of research, including theoretical physics, geometry, $K$-theory, cohomology, computer vision, robotics, quantum mechanics and digital image processing. These algebras are fundamental for describing symmetries in spacetime, especially in relation to rotations and Lorentz transformations. Clifford algebras provide a mathematical structure that unifies the inner and outer products of vectors, allowing for an elegant treatment of geometric concepts. 

Let $\{e_1,...,e_m\}$ be the standard basis of the inner product vector space $\R^{0,m}$ equipped with a
geometric product according to the multiplication rules
\begin{equation}
e_ie_j+e_je_i=-2\delta_{i,j},\quad i,j=1,...,m,
\end{equation}
where $\delta_{i,j}$ is the Kronecker symbol. This product and the additional axiom of associativity generate the $2^m$-dimensional real Clifford algebra $\R_{0,m}$. We shall freely use the well-known properties of Clifford algebras which the reader can find in many sources. Nevertheless, we compile some of those relevant to our purpose for completeness. The algebra $\R_{0,m}$ can be split up subspace of $k$-vectors $(k\in\mathbb N_m\cup\{0\})$
$$\R_{0,m}^{(k)}=\left\{a\in\R_{0,m}:a=\sum_{|A|=k}a_Ae_A,a_A\in\R\right\},$$
where $A$ runs over all the possible ordered sets $A=\{1\leq j_1<...<j_k\leq m\}$, or $A=\emptyset$, and $e_A:=e_{j_1}e_{j_2}...e_{j_k}$, $e_0=e_{\emptyset}=1$. Namely, $\R_{0,m}=\bigoplus_{k=0}^m\R_{0,m}^{(k)}$. The general elements of $\R_{0,m}$ are real linear combinations of basis blades $e_A$, called Clifford numbers, multivectors, or hypercomplex numbers. Any Clifford number $a\in\R_{0,m}$ may thus be written as
\begin{equation}
a=[a]_0+[a]_1+...+[a]_k+...+[a]_m,
\end{equation}
where $[]_k$ is the linear projection of $a$ onto the subspace of $k$-vectors $\R_{0,m}^{(k)}$. In particular, $\R_{0,m}^{(1)}$, $\R_{0,m}^{(0)}\oplus\R_{0,m}^{(1)}$ and $\R_{0,m}^{(m)}$ are called, respectively, the space of vectors, paravectors and pseudoscalars in $\R_{0,m}$. Note that $\R^{m}$ may be naturally identified with $\R_{0,m}^{(1)}$ by associating to any element $(x_1,...,x_m)\in\R^{m}$ the vector $\ux=\sum_{j=1}^mx_je_j$. For a vector $\vec{u}$ and a $k$-vector $F_k$, their product  $\vec{u}F_k$ splits into a $(k-1)$-vector and a $(k+1)$-vector, namely: 
$$\vec{u}F_k=[\vec{u}F_k]_{k-1}+[\vec{u}F_k]_{k+1},$$
where 
$$[\vec{u}F_k]_{k-1}=\frac{1}{2}[\vec{u}F_k-(-1)^kF_k\vec{u}]$$
and
$$[\vec{u}F_k]_{k+1}=\frac{1}{2}[\vec{u}F_k+(-1)^kF_k\vec{u}].$$
The inner and outer products between $\vec{u}$ and $F_k$ are defined by $\vec{u}\bullet F_k:=[\vec{u}F_k]_{k-1}$ and $\vec{u}\wedge F_k:=[\vec{u}F_k]_{k+1}$, respectively. In the same way, we define $\pux\bullet F_k=\frac{1}{2}[\pux F_k-(-1)^kF_k\pux]=[\pux F_k]_{k-1}$ and $\pux\wedge F_k=\frac{1}{2}[\pux F_k+(-1)^kF_k\pux]=[\pux F_k]_{k+1}$.

Conjugation in $\R_{0,m}$ is defined as the anti-involution $a\to\overline{a}$ for which $\overline{e_i}=-e_i$. Therefore,
$$\overline{a}=\sum_Aa_A\overline{e}_A,\quad\overline{e}_A=(-1)^{\frac{|A|(|A|+1)}{2}}e_A.$$
A norm $\|.\|$ on $\R_{0,m}$ is defined by $\|a\|^2=[a\overline{a}]_{0}=\sum_{A}a_A^2$.

In the sequel, when speaking of a domain $\Omega$, it will always be assumed to be open and simply connected set of $\R^m$ with sufficiently smooth boundary $\partial\Omega$. We will denote by $n(\ux)$ the outward unit normal vector at $\ux\in\partial\Omega$. In Section \ref{section3} we will consider domains with boundary satisfying conditions of a more general character. We shall use the temporary notation $\Omega_+=\Omega$, $\Omega_-=\R^m\setminus\overline{\Omega}$. We will consider functions $f:\Omega\to\R_{0,m}$ to be written as $f=\sum_{A}f_Ae_A$, where $f_A$ are $\R$-valued functions. The notions of continuity, differentiability and integrability of an $\R_{0,m}$-valued function have the usual component-wise meaning. The space of all $s$-times continuous differentiable, $s$-times $\nu$-H\"older continuously differentiable and $p$-integrable
functions are denoted by $C^s(\Omega)$, $C^{s,\nu}(\Omega)$ and $L^p(\Omega)$, respectively. 

The Dirac operator \eqref{Diracoperator} 
factorizes the Laplacian in $\R^{m}$ in the sense that $-\Delta=\pux\pux$. This fact implies that monogenic functions are also harmonic. In this way, Clifford analysis may be viewed as a refinement
of classical harmonic analysis. 

The sandwich equation 
\begin{equation}\label{eqsand}
\pux f\pux=0,
\end{equation}
can be seen as a non-commutative version of the more traditional Laplace equation
\begin{equation}\label{eqlaplace}
\pux\pux f=0.
\end{equation}
Solutions of \eqref{eqsand} are known as inframonogenic functions. Such functions were introduced in \cite{MPS2}, where a Fischer decomposition for homogeneous polynomials in terms of inframonogenic polynomials was established. 

Interest in this class of functions has grown significantly in recent years,  as evidenced by the increasing number of papers published on the subject, see for example  \cite{AbreuDAS,DAS1,DAS2,DAS3,DAS5,joao,Din,lavicka,MPS1,MAB1,MAB4,MMA,MDAS,wang}.
Using the inner and outer products, the equations \eqref{eqsand} and \eqref{eqlaplace} can be rewritten respectively as
\begin{equation}
\sum_{k=0}^m(-1)^k(\pux\bullet\pux\wedge[f]_k-\pux\wedge\pux\bullet[f]_k)=0
\end{equation}
and
\begin{equation}
\sum_{k=0}^m(\pux\bullet\pux\wedge[f]_k+\pux\wedge\pux\bullet[f]_k)=0.
\end{equation}
It should be noticed that when restricting our attention to vector-valued solutions $f =\vec{u}$ in $\R^3$, the above sandwich equation \eqref{eqsand} can be written in the form
\begin{equation}
\nabla(\nabla\cdot\vec{u})+\nabla\times\nabla\times\vec{u}=0,
\end{equation}
while Laplace equation takes the form
\begin{equation}
\nabla(\nabla\cdot\vec{u})-\nabla\times\nabla\times\vec{u}=0.
\end{equation}
That subtle change between sandwich and Laplace equations causes crucial differences in the properties of their respective solutions. The Laplacian is a strongly elliptic operator, while the operator $\pux(.)\pux$ is not. This means that we cannot, in general, ensure the uniqueness of the solution to the Dirichlet problem associated with \eqref{eqsand}.

The displacement vector $\vec{u}$ of the points of  a three-dimensional isotropic elastic body in the
absence of body forces is described by the Lam\'e-Navier system
\begin{equation}\label{sln}
\mu\Delta\vec{u}+(\mu+\lambda)\nabla(\nabla\cdot\vec{u})=0,
\end{equation}
where $\lambda$ and $\mu$ are the so-called Lam\'e parameters. Here, the quantities $\mu > 0$ and $\lambda >-\frac{2}{3} \mu$ are elastic constants that depend on the body. This system was originally introduced by the French mathematician Gabriel Lam\'e in 1837 while studying the method of separation of variables for solving the wave equation in elliptic coordinates \cite{lame}. In \cite{MAB3}, the authors reformulate the system \eqref{sln} in terms of the three-dimensional Dirac operator as follows
\begin{equation}\label{68}
\left(\frac{\mu+\lambda}{2}\right)\pux \vec{u}\pux+\left(\frac{3\mu+\lambda}{2}\right)\pux\pux \vec{u}=0,
\end{equation}
and proved that $\vec{u}$ can be decomposed as the sum of a harmonic vector field  and an inframonogenic vector field. Note in \eqref{68} that $\frac{3\mu+\lambda}{2}>\frac{\mu+\lambda}{2}$, which causes the strong ellipticity of the Laplacian to prevail in this equation. This system \eqref{68} has been studied recently in \cite{marco1,DASJD,marco2,MD}.

Of particular interest for our purpose is the paper \cite{DAS4}, where the authors study natural generalizations of the Lam\'e-Navier system using arbitrary orthonormal bases. In general, we will work with the following generalized Lam\'e-Navier system 
\begin{equation}\label{gLNs}
\left(\frac{\mu+\lambda}{2}\right)\hD f\pD+\left(\frac{3\mu+\lambda}{2}\right)\hD\pD f=0,
\end{equation}
where 
$$\hD=\sum_{j=1}^m\varphi_j\partial_{x_j}\quad\text{and}\quad
\pD=\sum_{j=1}^m\psi_j\partial_{x_j}$$
are the corresponding Dirac operators constructed with the orthonormal bases of $\R^m$: $\varphi=\{\varphi_1,...,\varphi_m\}$ and $\psi=\{\psi_1,...,\psi_m\}$. In the literature, these orthonormal bases are known as structural sets \cite{shapiro1,shapiro2}. We will denote by $\La$ the generalized Lam\'e-Navier operator: $\left(\frac{\mu+\lambda}{2}\right)\hD(.)\pD+\left(\frac{3\mu+\lambda}{2}\right)\hD\pD(.)$.

In \cite{GKS}, G\"urlebeck et al. showed that the class of $\psi$-hyperholomorphic functions (the solutions of the generalized Dirac equation $\pD f=0$) is more than what we get by orthogonal transformation, when the picture changed completely in the study of a $\Pi$-operator which involve a pair of different orthonormal basis. First of all there is no more a single orthogonal transformation which reduces it to the standard case. Second we have that for two different structural sets they factorize a second order operator which is not anymore a scalar operator. As it turns out several important questions are linked with and uniformized when
two different structural sets take part.

A characteristic feature of the generalized system \eqref{gLNs} is that the relative classical Dirichlet problem is no more well-posed, unlike what happens for the elastic equilibrium equation. However, in \cite{DAS6} polynomial solutions have been constructed for homogeneous Dirichlet problems associated with \eqref{gLNs} in ellipsoidal domains.

\section{Cauchy and Teodorescu transforms for $\La$}
Let be $\varphi=\{\varphi_1,...,\varphi_m\}$ a structural set in $\R^m$. In what follows we restrict our study to the case $m>2$. 

Since $\hD\hD=-\Delta$, the fundamental solution of $\hD$ (called Clifford-Cauchy kernel) is given by
$$K_\varphi(\ux)=\hD E_1(\ux)=-\frac{1}{\sigma_m}\frac{\ux_{\varphi}}{\|\ux\|^m}.$$
Here, $E_1$ is the fundamental solution of the Laplacian 
$$E_1(\ux)=\frac{1}{(m-2)\sigma_m\|\ux\|^{m-2}}$$
and $\ux_\varphi:=\sum_{j=1}^mx_j\varphi_j$ if $\ux=\sum_{j=1}^mx_je_j$. An $\R_{0,m}$-valued function $f\in C^1(\Omega)$, is say to be left $\varphi$-hyperholomorphic (respectively, right) in $\Omega$ if $\hD f = 0$ ($f\hD = 0$) in $\Omega$. It is easy to check that the function $K_\varphi$ is two-sided $\varphi$-hyperholomorphic. We define the inner and outer products between $\hD$ and a $k$-vector field $F_k$ by $$\hD\bullet F_k:=\frac{1}{2}[\hD F_k-(-1)^kF_k\hD]=[\hD F_k]_{k-1}$$ and $$\hD\wedge F_k:=\frac{1}{2}[\hD F_k+(-1)^kF_k\hD]=[\hD F_k]_{k+1}.$$

The Stokes theorem in Clifford analysis yields the Borel–Pompeiu
integral representation formula for $\R_{0,m}$-valued functions $f\in C^1(\overline{\Omega})$. Actually,
\begin{equation}\label{BPcla}
f(\ux)=(\mathcal{C}_\varphi^l f)(\ux)+(\mathcal{T}_\varphi^l \hD f)(\ux),\quad\ux\in\Omega,
\end{equation}
where
\begin{equation}
(\mathcal{C}_\varphi^l f)(\ux):=\int_{\partial\Omega}K_\varphi(\uy-\ux)n_\varphi(\uy)f(\uy)d\uy,\quad\ux\notin\partial\Omega,
\end{equation}
and
\begin{equation}
(\mathcal{T}_\varphi^l f)(\ux):=-\int_{\Omega}K_\varphi(\uy-\ux)f(\uy)d\uy,
\end{equation}
are the Cauchy and Teodorescu transforms, respectively. Here, $n_\varphi(\uy)=\sum_{j=1}^mn_j(\uy)\varphi_j$, where $n_j(\uy)$ is the $j$-th component of the outward unit normal vector on $\partial\Omega$ at the point $\uy\in\partial\Omega$. The right-handed version of equation \eqref{BPcla} reads  
\[
f(\ux)=(\mathcal{C}_\varphi^r f)(\ux)+(\mathcal{T}_\varphi^r f\hD)(\ux),\quad\ux\in\Omega,
\]
where
\[
(\mathcal{C}_\varphi^r f)(\ux):=\int_{\partial\Omega}f(\uy)n_\varphi(\uy)K_\varphi(\uy-\ux)d\uy,\quad\ux\notin\partial\Omega,
\]
and
\[
(\mathcal{T}_\varphi^r f)(\ux):=-\int_{\Omega}f(\uy)K_\varphi(\uy-\ux)d\uy.
\]

The fundamental solution of the operator $\hD\pD(.)$ is given by
\begin{equation}\label{kernel}
K_{\varphi,\psi}(\ux)=\pD\hD\left[\frac{\|\ux\|^{4-m}}{\sigma_m(2-m)(4-m)}\right]=\frac{(2-m)\|\ux\|^{-m}\ux_\psi\ux_{\varphi}+\|\ux\|^{2-m}\sum_{j=1}^m\psi_j\varphi_j}{2\sigma_m(2-m)}.
\end{equation}
It is easy to see that $K_{\varphi,\psi}$ satisfies the following important relation
\begin{equation}
\pD K_{\varphi,\psi}(\ux)=K_\varphi(\ux),
\end{equation}
which implies that the Cauchy and Teodorescu transforms associated to $\hD\pD(.)$ may be defined by
\begin{equation}\label{Thar}
(\mathcal{C}_{\varphi,\psi}f)(\ux):=-\int_{\partial\Omega}K_{\varphi,\psi}(\uy-\ux)n_\varphi(\uy)f(\uy)d\uy
\end{equation}
and
\begin{equation}\label{TThar}
(\mathcal{T}_{\varphi,\psi}f)(\ux):=\int_{\Omega}K_{\varphi,\psi}(\uy-\ux)f(\uy)d\uy,
\end{equation}
respectively. 

Notice that $\pD(\mathcal{C}_{\varphi,\psi}f)=\mathcal{C}_{\varphi}^lf$ and, therefore, $\hD\pD(\mathcal{C}_{\varphi,\psi}f)=0$ in $\R^m\setminus\partial\Omega$. Similarly,  $\pD(\mathcal{T}_{\varphi,\psi}f)=\mathcal{T}_{\varphi}^lf$ and, thus, $\hD\pD(\mathcal{T}_{\varphi,\psi}f)=f$ in $\Omega$. 

Taking into account \eqref{kernel}, the integral transforms \eqref{Thar} and \eqref{TThar} admit the rewritten 
\begin{align}
(\mathcal{C}_{\varphi,\psi}f)(\ux)&=\frac{1}{2}\left(\int_{\partial\Omega}(\uy_\psi-\ux_\psi)K_\varphi(\uy-\ux)n_\varphi(\uy)f(\uy)d\uy+\sum_{j=1}^m\int_{\partial\Omega}\psi_j\varphi_jE_1(\uy-\ux)n_\varphi(\uy)f(\uy)d\uy\right),\label{Tharmr}\\
(\mathcal{T}_{\varphi,\psi}f)(\ux)&=-\frac{1}{2}\left(\int_{\Omega}(\uy_\psi-\ux_\psi)K_\varphi(\uy-\ux)f(\uy)d\uy+\sum_{j=1}^m\int_{\Omega}\psi_j\varphi_jE_1(\uy-\ux)f(\uy)d\uy\right).\label{TTharmr}
\end{align}

The Cauchy and Teodorescu transforms associated to $\hD(.)\pD$, which have been introduced in \cite{DAS1}. 

These integral operators have the form
\begin{equation}\label{tinfra}
(\mathcal{C}_{\varphi,\psi}^\textsf{infra}f)(\ux):=\frac{1}{2}\left(\int_{\partial\Omega}K_\varphi(\uy-\ux)n_\varphi(\uy)f(\uy)(\uy_\psi-\ux_\psi)d\uy+\sum_{j=1}^m\varphi_j\int_{\partial\Omega} E_1(\uy-\ux)n_\varphi(\uy)f(\uy)d\uy\psi_j\right),
\end{equation}
and
\begin{equation}\label{ttinfra}
(\mathcal{T}_{\varphi,\psi}^\textsf{infra}f)(\ux):=-\frac{1}{2}\left(\int_{\Omega}K_\varphi(\uy-\ux)f(\uy)(\uy_\psi-\ux_\psi)d\uy+\sum_{j=1}^m\varphi_j\int_\Omega E_1(\uy-\ux)f(\uy)d\uy\psi_j\right),
\end{equation}
and satisfy the relations $(\mathcal{C}_{\varphi,\psi}^\textsf{infra}f)\pD=\mathcal{C}_\varphi^l f$ and $(\mathcal{T}_{\varphi,\psi}^\textsf{infra}f)\pD=\mathcal{T}_\varphi^l f$, in $\Omega$, which leads to $\hD(\mathcal{C}_{\varphi,\psi}^\textsf{infra}f)\pD=0$ in $\R^m\setminus\partial\Omega$ and  
$\hD(\mathcal{T}_{\varphi,\psi}^\textsf{infra}f)\pD=f$ in $\Omega$, respectively.

The null solutions of operators $\hD(.)\pD$ and $\hD\pD(.)$ are known as $(\varphi,\psi)$-inframonogenic and $(\varphi,\psi)$-harmonic functions, respectively \cite{DAS1,DAS2,bory,serrano}.

Let us now prove the following formula, which will be used in the sequel:
\begin{equation}
\pD\left[(\mathcal{T}_{\varphi,\psi}^\textsf{infra}f)(\ux)\right]=\left[(\mathcal{T}_{\varphi,\psi}f)(\ux)\right]\pD,\quad\ux\in\Omega.
\end{equation}

Indeed, we have
\begin{align*}
&-2\pD\left[(\mathcal{T}_{\varphi,\psi}^\textsf{infra}f)(\ux)\right]\\&=-\frac{1}{\sigma_m}\sum_{j=1}^m\int_\Omega \psi_j\left( -\varphi_j\|\uy-\ux\|^{-m}+m(\uy_\varphi-\ux_\varphi)\|\uy-\ux\|^{-m-1}\frac{y_j-x_j}{\|\uy-\ux\|}\right)f(\uy)(\uy_\psi-\ux_\psi)d\uy\\
&\quad-\sum_{j=1}^m\int_\Omega\psi_jK_{\varphi}(\uy-\ux)f(\uy)\psi_jd\uy-\sum_{j=1}^m\int_\Omega K_{\psi}(\uy-\ux)\varphi_j f(\uy)\psi_jd\uy\\
&=-\int_\Omega\sum_{j=1}^m\psi_j\varphi_jf(\uy)K_\psi(\uy-\ux)d\uy-\frac{m}{\sigma_m}\int_\Omega(\uy_\psi-\ux_\psi)(\uy_\varphi-\ux_\varphi)f(\uy)(\uy_\psi-\ux_\psi)\|\uy-\ux\|^{-m-2}d\uy\\
&\quad-\int_\Omega\sum_{j=1}^m[\psi_jK_\varphi(\uy-\ux)+K_\psi(\uy-\ux)\varphi_j]f(\uy)\psi_jd\uy,
\end{align*}
\begin{align*}
&-2\left[(\mathcal{T}_{\varphi,\psi}f)(\ux)\right]\pD\\&=-\frac{1}{\sigma_m}\sum_{j=1}^m\int_\Omega \left(m\|\uy-\ux\|^{-m-2}(y_j-x_j)(\uy_\psi-\ux_\psi)(\uy_\varphi-\ux_\varphi)-\|\uy-\ux\|^{-m}\psi_j(\uy_\varphi-\ux_\varphi)-\|\uy-\ux\|^{-m}(\uy_\psi-\ux_\psi)\varphi_j\right) f(\uy)\psi_j\\
&\quad+\frac{1}{\sigma_m}\sum_{j=1}^m\psi_j\varphi_j\int_\Omega f(\uy)\|\uy-\ux\|^{-m}(\uy_\psi-\ux_\psi)d\uy\\
&=-\int_\Omega\sum_{j=1}^m\psi_j\varphi_jf(\uy)K_\psi(\uy-\ux)d\uy-\frac{m}{\sigma_m}\int_\Omega(\uy_\psi-\ux_\psi)(\uy_\varphi-\ux_\varphi)f(\uy)(\uy_\psi-\ux_\psi)\|\uy-\ux\|^{-m-2}d\uy\\
&\quad-\int_\Omega\sum_{j=1}^m[\psi_jK_{\varphi}(\uy-\ux)+K_\psi(\uy-\ux)\varphi_j]f(\uy)\psi_jd\uy.
\end{align*}
Similarly, we also obtain that
\begin{equation}
\pD\left[(\mathcal{C}_{\varphi,\psi}^\textsf{infra}f)(\ux)\right]=\left[(\mathcal{C}_{\varphi,\psi}f)(\ux)\right]\pD,\quad\ux\in\R^m\setminus\partial\Omega.
\end{equation}

The Cauchy and Teodorescu transforms associated to $\La$ will be defined as a linear combination of the corresponding integral operators \eqref{Tharmr}, \eqref{TTharmr}, \eqref{tinfra} and \eqref{ttinfra}, a linear combination which involves the Lam\'e parameters. Indeed, we define 
\begin{equation}
(\mathcal{C}_{\varphi,\psi}^\dagger f)(\ux):=-\frac{\mu+\lambda}{2\mu(2\mu+\lambda)}(\mathcal{C}_{\varphi,\psi}^\textsf{infra}f)(\ux)+\frac{3\mu+\lambda}{2\mu (2\mu+\lambda)}(\mathcal{C}_{\varphi,\psi}f)(\ux),
\end{equation}
and
\begin{equation}
(\mathcal{T}_{\varphi,\psi}^\dagger f)(\ux):=-\frac{\mu+\lambda}{2\mu (2\mu+\lambda)}(\mathcal{T}_{\varphi,\psi}^\textsf{infra}f)(\ux)+\frac{3\mu+\lambda}{2\mu (2\mu+\lambda)}(\mathcal{T}_{\varphi,\psi}f)(\ux).
\end{equation}
\begin{proposition}
Let $f\in C^2(\Omega)$. Then $\mathcal{T}_{\varphi,\psi}^\dagger f$ is also twice differentiable in $\Omega$ and
\begin{equation}
\La(\mathcal{T}_{\varphi,\psi}^\dagger f(\ux))=f(\ux),\quad\ux\in\Omega.
\end{equation}
\end{proposition}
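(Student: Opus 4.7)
My strategy is to expand $\La(\mathcal{T}_{\varphi,\psi}^\dagger f)$ by linearity and then apply the three identities recorded earlier in the excerpt: the two inversion formulas $\hD\pD(\mathcal{T}_{\varphi,\psi}f)=f$ and $\hD(\mathcal{T}_{\varphi,\psi}^{\textsf{infra}}f)\pD=f$ in $\Omega$, together with the commutation formula $\pD(\mathcal{T}_{\varphi,\psi}^{\textsf{infra}}f)=(\mathcal{T}_{\varphi,\psi}f)\pD$ just established. Writing $\alpha=\tfrac{\mu+\lambda}{2}$, $\beta=\tfrac{3\mu+\lambda}{2}$, $c=-\tfrac{\mu+\lambda}{2\mu(2\mu+\lambda)}$ and $d=\tfrac{3\mu+\lambda}{2\mu(2\mu+\lambda)}$, the definitions of $\La$ and $\mathcal{T}_{\varphi,\psi}^\dagger$ produce
\[
\La(\mathcal{T}_{\varphi,\psi}^\dagger f) = \alpha c\,\hD(\mathcal{T}_{\varphi,\psi}^{\textsf{infra}}f)\pD + \alpha d\,\hD(\mathcal{T}_{\varphi,\psi}f)\pD + \beta c\,\hD\pD(\mathcal{T}_{\varphi,\psi}^{\textsf{infra}}f) + \beta d\,\hD\pD(\mathcal{T}_{\varphi,\psi}f).
\]

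The first and fourth terms collapse to $\alpha c\,f$ and $\beta d\,f$ directly by the two inversion formulas. For the two remaining cross terms, I apply $\hD$ on the left to both sides of the commutation formula; since for any $C^2$ function $g$ one has $\hD(g\pD)=(\hD g)\pD$ by Clairaut's theorem (the relevant partial derivatives commute and they act on opposite sides of the Clifford product), this produces $\hD\pD(\mathcal{T}_{\varphi,\psi}^{\textsf{infra}}f)=\hD(\mathcal{T}_{\varphi,\psi}f)\pD$ in $\Omega$. Hence the cross terms consolidate into $(\alpha d+\beta c)\,\hD(\mathcal{T}_{\varphi,\psi}f)\pD$ and the whole expression reduces to
\[
\La(\mathcal{T}_{\varphi,\psi}^\dagger f) = (\alpha c+\beta d)\,f + (\alpha d+\beta c)\,\hD(\mathcal{T}_{\varphi,\psi}f)\pD.
\]

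A direct calculation yields $\alpha c+\beta d=\tfrac{(3\mu+\lambda)^2-(\mu+\lambda)^2}{4\mu(2\mu+\lambda)}=\tfrac{(2\mu)(4\mu+2\lambda)}{4\mu(2\mu+\lambda)}=1$, while $\alpha d+\beta c=0$ since its two summands differ only by sign. This produces the desired identity $\La(\mathcal{T}_{\varphi,\psi}^\dagger f)=f$ on $\Omega$. The twice-differentiability of $\mathcal{T}_{\varphi,\psi}^\dagger f$ in $\Omega$ is inherited from that of its constituents $\mathcal{T}_{\varphi,\psi}f$ and $\mathcal{T}_{\varphi,\psi}^{\textsf{infra}}f$, which are convolutions of the $C^2$ datum $f$ against the weakly singular kernel $K_{\varphi,\psi}$ of order $\|\ux\|^{2-m}$; standard Newton-potential regularity permits two interior differentiations, which is in any case already implicit in the pointwise validity of the inversion identities used above. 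I expect the only genuine obstacle to be the non-commutativity bookkeeping, specifically the Clairaut-type identification of the two cross terms; once that is in place the proof reduces to the elementary coefficient verification.
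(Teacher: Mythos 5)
Your proof is correct and follows essentially the same route as the paper: both rest on the two inversion identities, the commutation formula $\pD(\mathcal{T}_{\varphi,\psi}^{\textsf{infra}}f)=(\mathcal{T}_{\varphi,\psi}f)\pD$, and the same coefficient arithmetic $\alpha c+\beta d=1$, $\alpha d+\beta c=0$. The only (immaterial) difference is organizational: the paper first shows that the first-order operator $\mathcal{M}$ sends $\mathcal{T}_{\varphi,\psi}^\dagger f$ to $\mathcal{T}_\varphi^l f$ and then applies $\hD$, whereas you expand the full second-order operator at once and cancel the cross terms there.
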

\begin{proof}
We have
\begin{align*}
&\left(\frac{\mu+\lambda}{2}\right)(\mathcal{T}_{\varphi,\psi}^\dagger f(\ux))\pD+\left(\frac{3\mu+\lambda}{2}\right)\pD (\mathcal{T}_{\varphi,\psi}^\dagger f(\ux))\\&=-\frac{(\mu+\lambda)^2}{4\mu(2\mu+\lambda)}(\mathcal{T}_\varphi^l f)(\ux)+\frac{(3\mu+\lambda)^2}{4\mu(2\mu+\lambda)}(\mathcal{T}_\varphi^l f)(\ux)\\
&\quad-\frac{(\mu+\lambda)(3\mu+\lambda)}{4\mu(2\mu+\lambda)}\pD\left[(\mathcal{T}_{\varphi,\psi}^\textsf{infra}f)(\ux)\right]+\frac{(\mu+\lambda)(3\mu+\lambda)}{4\mu(2\mu+\lambda)}\left[(\mathcal{T}_{\varphi,\psi}f)(\ux)\right]\pD\\
&=(\mathcal{T}_\varphi^l f)(\ux).
\end{align*}
Thus,
$$\La(\mathcal{T}_{\varphi,\psi}^\dagger f(\ux))=\hD[(\mathcal{T}_\varphi^l f)(\ux)]=f(\ux),\quad\ux\in\Omega.
$$
\end{proof}
Analogously, we can prove that $\La(\mathcal{C}_{\varphi,\psi}^\dagger f(\ux))=0$ in $\R^m\setminus\partial\Omega$.

\begin{theorem}[Borel-Pompeiu formula] Let $f\in C^2(\overline{\Omega})$. Then
\begin{align}
&\frac{(3\mu+\lambda)^2}{4\mu (2\mu+\lambda)}(\mathcal{C}_\psi^lf)(\ux)-\frac{(\mu+\lambda)^2}{4\mu (2\mu+\lambda)}(\mathcal{C}_\psi^rf)(\ux)+\mathcal{C}_{\varphi,\psi}^\dagger\left(\frac{\mu+\lambda}{2}f\pD+\frac{3\mu+\lambda}{2}\pD f\right)(\ux)+\mathcal{T}_{\varphi,\psi}^\dagger(\La f)(\ux)\nonumber\\
&+\frac{(3\mu+\lambda)(\mu+\lambda)}{4\mu (2\mu+\lambda)}\left(\int_{\partial\Omega}K_\psi(\uy-\ux)f(\uy)n_\psi(\uy)d\uy-\int_{\partial\Omega}n_\psi(\uy)f(\uy)K_\psi(\uy-\ux)d\uy\right)\nonumber\\
&=\left\{\begin{array}{rl}
f(\ux),&\text{if}\;\ux\in\Omega,\\
0,&\text{if}\;\ux\in\R^m\setminus\overline{\Omega}.
\end{array}\right.\label{bpform}
\end{align}
\end{theorem}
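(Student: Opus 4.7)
The natural route is to iterate the classical first-order Borel--Pompeiu identities, once for the $\pD$-Dirac operator and once for the $\hD$-Dirac operator, and then repackage the resulting iterated integrals using the definitions of $\mathcal{C}_{\varphi,\psi}^{\dagger}$, $\mathcal{T}_{\varphi,\psi}^{\dagger}$ and the kernel $K_{\varphi,\psi}$. Set $\alpha:=\frac{(3\mu+\lambda)^{2}}{4\mu(2\mu+\lambda)}$ and $\beta:=\frac{(\mu+\lambda)^{2}}{4\mu(2\mu+\lambda)}$; a direct expansion yields $\alpha-\beta=1$. Forming the weighted combination
$$\alpha\bigl[\mathcal{C}_{\psi}^{l}f+\mathcal{T}_{\psi}^{l}(\pD f)\bigr]-\beta\bigl[\mathcal{C}_{\psi}^{r}f+\mathcal{T}_{\psi}^{r}(f\pD)\bigr]$$
of the left and right Borel--Pompeiu formulas for $\pD$ immediately produces on the left-hand side the required piecewise value ($f(\ux)$ on $\Omega_{+}$, $0$ on $\Omega_{-}$), together with the two Cauchy terms $\alpha\,\mathcal{C}_{\psi}^{l}f-\beta\,\mathcal{C}_{\psi}^{r}f$ appearing in \eqref{bpform}. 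The remaining task is to identify $R(\ux):=\alpha\,(\mathcal{T}_{\psi}^{l}\pD f)(\ux)-\beta\,(\mathcal{T}_{\psi}^{r}f\pD)(\ux)$ with the sum of the three remaining terms of \eqref{bpform}.

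To do so, I would substitute inside each Teodorescu integral the left $\hD$-Borel--Pompeiu expansions
$$\pD f=\mathcal{C}_{\varphi}^{l}(\pD f)+\mathcal{T}_{\varphi}^{l}(\hD\pD f),\qquad f\pD=\mathcal{C}_{\varphi}^{l}(f\pD)+\mathcal{T}_{\varphi}^{l}(\hD f\pD),$$
the second relying on the identity $\hD(f\pD)=(\hD f)\pD$, which holds because $\hD$ acts from the left and $\pD$ from the right. After a Fubini-type interchange, the iterated volume contributions $\mathcal{T}_{\psi}^{l}\mathcal{T}_{\varphi}^{l}(\hD\pD f)$ and $\mathcal{T}_{\psi}^{r}\mathcal{T}_{\varphi}^{l}(\hD f\pD)$ reduce, via the explicit form \eqref{kernel} of $K_{\varphi,\psi}$ and the relation $\pD K_{\varphi,\psi}=K_{\varphi}$, to single integrals against $K_{\varphi,\psi}$. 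Using $\La f=\frac{\mu+\lambda}{2}\hD f\pD+\frac{3\mu+\lambda}{2}\hD\pD f$ together with the definition of $\mathcal{T}_{\varphi,\psi}^{\dagger}$ as the corresponding weighted combination of $\mathcal{T}_{\varphi,\psi}$ and $\mathcal{T}_{\varphi,\psi}^{\textsf{infra}}$, these $\alpha$- and $\beta$-weighted volume pieces collapse exactly to $\mathcal{T}_{\varphi,\psi}^{\dagger}(\La f)(\ux)$.

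The iterated boundary pieces $\mathcal{T}_{\psi}^{l}\mathcal{C}_{\varphi}^{l}(\pD f)$ and $\mathcal{T}_{\psi}^{r}\mathcal{C}_{\varphi}^{l}(f\pD)$ are handled analogously, now invoking the identities $\pD(\mathcal{C}_{\varphi,\psi}^{\textsf{infra}}g)=(\mathcal{C}_{\varphi,\psi}g)\pD$ and $\pD(\mathcal{T}_{\varphi,\psi}^{\textsf{infra}}g)=(\mathcal{T}_{\varphi,\psi}g)\pD$ established immediately before the theorem, combined with the definition of $\mathcal{C}_{\varphi,\psi}^{\dagger}$ as a linear combination of $\mathcal{C}_{\varphi,\psi}$ and $\mathcal{C}_{\varphi,\psi}^{\textsf{infra}}$. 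This reassembly yields the term $\mathcal{C}_{\varphi,\psi}^{\dagger}\bigl(\frac{\mu+\lambda}{2}f\pD+\frac{3\mu+\lambda}{2}\pD f\bigr)$. Because Clifford multiplication is non-commutative (in particular $K_{\psi}fn_{\psi}\neq n_{\psi}fK_{\psi}$), the regrouping leaves behind a single residual single-layer term, which turns out to be precisely the antisymmetric $K_{\psi}$-boundary integral displayed in \eqref{bpform}, with the claimed coefficient $\frac{(3\mu+\lambda)(\mu+\lambda)}{4\mu(2\mu+\lambda)}$ (equal to the geometric mean $\sqrt{\alpha\beta}$).

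The main technical obstacle is the careful non-commutative bookkeeping: one must track whether each occurrence of $\hD$ or $\pD$ acts from the left or from the right, which normal vector $n_{\varphi}$ or $n_{\psi}$ decorates each boundary integral, and how the two structural sets $\varphi$ and $\psi$ interleave under Clifford multiplication on each side of the density. The delicate verification is that every obstruction produced by non-commutativity coalesces into the single antisymmetric $K_{\psi}$-integral, without leaving residual mixed $K_{\varphi}$-$K_{\psi}$ boundary pieces or spurious coefficient contributions. The exterior case $\ux\in\R^{m}\setminus\overline{\Omega}$ follows by the same manipulations, since each classical first-order Borel--Pompeiu transform vanishes identically there.
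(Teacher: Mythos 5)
Your skeleton is right: your coefficients satisfy $\alpha-\beta=1$, the weighted combination $\alpha[\mathcal{C}_\psi^l f+\mathcal{T}_\psi^l(\pD f)]-\beta[\mathcal{C}_\psi^r f+\mathcal{T}_\psi^r(f\pD)]$ does produce the piecewise value and the two Cauchy terms, and the residual identity you must then prove for $R(\ux)$ is exactly the correct one. The gap is in the mechanism you propose for proving it. The compositions $\mathcal{T}_\psi^l\mathcal{T}_\varphi^l$ and $\mathcal{T}_\psi^r\mathcal{T}_\varphi^l$ do \emph{not} ``reduce, via Fubini, to single integrals against $K_{\varphi,\psi}$.'' For the left composition, Fubini together with the Borel--Pompeiu formula applied to $\uy\mapsto K_{\varphi,\psi}(\uz-\uy)$ gives
\begin{equation*}
\mathcal{T}_\psi^l\mathcal{T}_\varphi^l h(\ux)=\mathcal{T}_{\varphi,\psi}h(\ux)-\int_\Omega\mathcal{C}_\psi^l\bigl[K_{\varphi,\psi}(\uz-\cdot)\bigr](\ux)\,h(\uz)\,d\uz,
\end{equation*}
so the two operators agree only up to a nontrivial left $\psi$-hyperholomorphic correction (both are right inverses of $\hD\pD(\cdot)$, which determines them only modulo $(\varphi,\psi)$-harmonic functions), and that boundary--volume double integral must still be converted into the displayed terms. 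For the right composition the situation is worse: in $\int\!\!\int K_\varphi(\uz-\uy)\,h(\uz)\,K_\psi(\uy-\ux)\,d\uz\,d\uy$ the density sits between kernels in \emph{different} integration variables, so no convolution identity applies and there is no direct route to $\mathcal{T}_{\varphi,\psi}^{\textsf{infra}}h$, whose kernel sandwiches $h(\uy)$ between $K_\varphi(\uy-\ux)$ and $(\uy_\psi-\ux_\psi)$ evaluated at the \emph{same} point. Hence the assertions that the volume pieces ``collapse exactly to $\mathcal{T}_{\varphi,\psi}^\dagger(\La f)$'' and that ``every obstruction coalesces into the single antisymmetric integral'' are precisely the content of the theorem, and they are left unproven.

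The paper avoids this by quoting the already-established second-order Borel--Pompeiu formulas $f=\mathcal{C}_\psi^l f+\mathcal{C}_{\varphi,\psi}(\pD f)+\mathcal{T}_{\varphi,\psi}(\hD\pD f)$ and $f=\mathcal{C}_\psi^r f+\mathcal{C}_{\varphi,\psi}^{\textsf{infra}}(f\pD)+\mathcal{T}_{\varphi,\psi}^{\textsf{infra}}(\hD f\pD)$, expanding $\mathcal{T}_{\varphi,\psi}^\dagger(\La f)$ into four terms, absorbing the two diagonal ones with these formulas, and reducing the two cross terms to the single identity
\begin{equation*}
\mathcal{T}_{\varphi,\psi}(\hD f\pD)-\mathcal{T}_{\varphi,\psi}^{\textsf{infra}}(\hD\pD f)=\mathcal{T}_\psi^l(f\pD)-\mathcal{T}_\psi^r(\pD f)+\mathcal{C}_{\varphi,\psi}^{\textsf{infra}}(\pD f)-\mathcal{C}_{\varphi,\psi}(f\pD);
\end{equation*}
the antisymmetric boundary integral then emerges from evaluating $\mathcal{T}_\psi^l(f\pD)-\mathcal{T}_\psi^r(\pD f)$ by a Stokes-theorem computation on each $k$-vector part of $f$ using the inner/outer product calculus. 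If you keep your iterative route you would in effect have to reprove those second-order formulas and would still face that Stokes computation, which your proposal omits entirely.
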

\begin{proof}
We prove only the statement for $\ux\in\Omega$. The proof for $\ux\in\R^m\setminus\overline{\Omega}$ is completely analogous.

Using the Borel-Pompeiu formulas involving the operators $\hD\pD(.)$ and $\hD(.)\pD$ (see \cite{DAS1} and \cite{bory}):
\begin{equation}\label{fbph}
f(\ux)=(\mathcal{C}_\psi^l f)(\ux)+(\mathcal{C}_{\varphi,\psi}\pD f)(\ux)+(\mathcal{T}_{\varphi,\psi}\hD\pD f)(\ux) 
\end{equation}
and
\begin{equation}\label{fbpi}
f(\ux)=(\mathcal{C}_\psi^rf)(\ux)+(\mathcal{C}_{\varphi,\psi}^\textsf{infra}f\pD)(\ux)+(\mathcal{T}_{\varphi,\psi}^\textsf{infra}\hD f\pD)(\ux),
\end{equation}
we have
\begin{align*}
\mathcal{T}_{\varphi,\psi}^\dagger(\La f)(\ux)&=-\frac{(\mu+\lambda)^2}{4\mu(2\mu+\lambda)}(\mathcal{T}_{\varphi,\psi}^\textsf{infra}\hD f\pD)(\ux)-\frac{(\mu+\lambda)(3\mu+\lambda)}{4\mu(2\mu+\lambda)}(\mathcal{T}_{\varphi,\psi}^\textsf{infra}\hD\pD f)(\ux)\\
&\quad+\frac{(3\mu+\lambda)(\mu+\lambda)}{4\mu(2\mu+\lambda)}(\mathcal{T}_{\varphi,\psi}\hD f\pD)(\ux)+\frac{(3\mu+\lambda)^2}{4\mu(2\mu+\lambda)}(\mathcal{T}_{\varphi,\psi}\hD\pD f)(\ux)\\
&=f(\ux)+\frac{(\mu+\lambda)^2}{4\mu(2\mu+\lambda)}(\mathcal{C}_\psi^rf)(\ux)-\frac{(3\mu+\lambda)^2}{4\mu(2\mu+\lambda)}(\mathcal{C}_\psi^lf)(\ux)\\
&\quad+ \frac{(\mu+\lambda)^2}{4\mu(2\mu+\lambda)}(\mathcal{C}_{\varphi,\psi}^\textsf{infra} f\pD)(\ux)-\frac{(3\mu+\lambda)^2}{4\mu(2\mu+\lambda)}(\mathcal{C}_{\varphi,\psi}\pD f)(\ux)\\
&\quad+\frac{(3\mu+\lambda)(\mu+\lambda)}{4\mu(2\mu+\lambda)}[(\mathcal{T}_{\varphi,\psi}\hD f\pD)(\ux)-(\mathcal{T}_{\varphi,\psi}^\textsf{infra}\hD\pD f)(\ux)]. 
\end{align*}
Now, applying the equations \eqref{fbph} and \eqref{fbpi}, and using the identity
\begin{equation}
(\mathcal{T}_\psi^l f)\pD=\pD(\mathcal{T}_\psi^r f),
\end{equation}
we get
\begin{equation}
(\mathcal{T}_{\varphi,\psi}\hD f\pD)(\ux)-(\mathcal{T}_{\varphi,\psi}^\textsf{infra}\hD\pD f)(\ux)=(\mathcal{T}_\psi^l f\pD)(\ux)-(\mathcal{T}_\psi^r \pD f)(\ux)+(\mathcal{C}_{\varphi,\psi}^\textsf{infra}\pD f)(\ux)-(\mathcal{C}_{\varphi,\psi} f\pD)(\ux). 
\end{equation}

We obtain that
\begin{align*}
&(\mathcal{T}_\psi^l f\pD)(\ux)-(\mathcal{T}_\psi^r \pD f)(\ux)\\&=\sum_{k=0}^m\int_\Omega (\pDy [f(\uy)]_k)K_\psi(\uy-\ux)d\uy-\sum_{k=0}^m\int_\Omega K_\psi(\uy-\ux)([f(\uy)]_k\pDy)d\uy\\
&=2\sum_{k=0}^m\int_\Omega (\pDy\bullet [f(\uy)]_k)\bullet K_\psi(\uy-\ux)d\uy+2\sum_{k=0}^{m}\int_\Omega (\pDy\wedge [f(\uy)]_k)\wedge K_\psi(\uy-\ux)d\uy\\
&=2\sum_{k=0}^m(-1)^{k+1}\int_\Omega ( [f(\uy)]_k\bullet \pDy)\bullet K_\psi(\uy-\ux)d\uy+2\sum_{k=0}^{m}(-1)^k\int_\Omega ([f(\uy)]_k\wedge\pDy)\wedge K_\psi(\uy-\ux)d\uy\\
&=2\sum_{k=0}^m(-1)^{k+1}\int_{\partial\Omega} ([f(\uy)]_k\bullet n_\psi(\uy))\bullet K_\psi(\uy-\ux)d\uy+2\sum_{k=0}^{m}(-1)^{k}\int_{\partial\Omega} ([f(\uy)]_k\wedge n_\psi(\uy))\wedge K_\psi(\uy-\ux)d\uy\\
&=\frac{1}{2}\sum_{k=0}^m\int_{\partial\Omega}[(-1)^{k+1}([f(\uy)]_kn_\psi(\uy)K_{\psi}(\uy-\ux)-(-1)^kn_\psi(\uy)[f(\uy)]_kK_\psi(\uy-\ux)))\\&\quad-K_{\psi}(\uy-\ux)[f(\uy)]_kn_\psi(\uy)+(-1)^kK_\psi(\uy-\ux)n_\psi(\uy)[f(\uy)]_k)]d\uy\\
&\quad+\frac{1}{2}\sum_{k=0}^m\int_{\partial\Omega}[(-1)^{k}([f(\uy)]_kn_\psi(\uy)K_{\psi}(\uy-\ux)+(-1)^kn_\psi(\uy)[f(\uy)]_kK_\psi(\uy-\ux)))\\&\quad-K_{\psi}(\uy-\ux)[f(\uy)]_kn_\psi(\uy)-(-1)^kK_\psi(\uy-\ux)n_\psi(\uy)[f(\uy)]_k)]d\uy\\
&=\int_{\partial\Omega}n_\psi(\uy)f(\uy)K_\psi(\uy-\ux)d\uy-\int_{\partial\Omega}K_\psi(\uy-\ux)f(\uy)n_\psi(\uy)d\uy.
\end{align*}

Therefore,
\begin{align*}
&f(\ux)\\&=\frac{(3\mu+\lambda)^2}{4\mu(2\mu+\lambda)}(\mathcal{C}_\psi^lf)(\ux)-\frac{(\mu+\lambda)^2}{4\mu(2\mu+\lambda)}(\mathcal{C}_\psi^rf)(\ux)-\frac{(\mu+\lambda)^2}{4\mu(2\mu+\lambda)}(\mathcal{C}_{\varphi,\psi}^\textsf{infra}f\pD)(\ux)+\frac{(3\mu+\lambda)^2}{4\mu(2\mu+\lambda)}(\mathcal{C}_{\varphi,\psi}\pD f)(\ux)\\
&\quad+\frac{(3\mu+\lambda)(\mu+\lambda)}{4\mu(2\mu+\lambda)}(\mathcal{C}_{\varphi,\psi}f\pD)(\ux)-\frac{(3\mu+\lambda)(\mu+\lambda)}{4\mu(2\mu+\lambda)}(\mathcal{C}_{\varphi,\psi}^\textsf{infra}\pD f)(\ux)+\mathcal{T}_{\varphi,\psi}^\dagger(\La f)(\ux)\\
&\quad+\frac{(3\mu+\lambda)(\mu+\lambda)}{4\mu(2\mu+\lambda)}\left(\int_{\partial\Omega}K_\psi(\uy-\ux)f(\uy)n_\psi(\uy)d\uy-\int_{\partial\Omega}n_\psi(\uy)f(\uy)K_\psi(\uy-\ux)d\uy\right)\\
&=\frac{(3\mu+\lambda)^2}{4\mu(2\mu+\lambda)}(\mathcal{C}_\psi^lf)(\ux)-\frac{(\mu+\lambda)^2}{4\mu(2\mu+\lambda)}(\mathcal{C}_\psi^rf)(\ux)+\mathcal{C}_{\varphi,\psi}^\dagger\left(\frac{\mu+\lambda}{2}f\pD+\frac{3\mu+\lambda}{2}\pD f\right)(\ux)+\mathcal{T}_{\varphi,\psi}^\dagger(\La f)(\ux)\\
&\quad+\frac{(3\mu+\lambda)(\mu+\lambda)}{4\mu(2\mu+\lambda)}\left(\int_{\partial\Omega}K_\psi(\uy-\ux)f(\uy)n_\psi(\uy)d\uy-\int_{\partial\Omega}n_\psi(\uy)f(\uy)K_\psi(\uy-\ux)d\uy\right),
\end{align*}	
and we are done.
\end{proof}
Looking at the above proof it is easy to check that the Borel-Pompeiu
formula \eqref{bpform}  will remain valid if the condition $f\in C^2(\overline{\Omega})$ is replaced by the
weaker assumption
\begin{equation}
f\in C^2(\Omega)\cap C^1(\overline{\Omega}),\quad\int_\Omega\|\Lay f(\uy)\|d\uy<+\infty.
\end{equation}
As we mentioned before, the above theorem yields a Cauchy representation formula.
\begin{corollary}[Cauchy integral formula]
Let $f\in C^2(\Omega)\cap C^1(\overline{\Omega})$. If $\La f=0$ in $\Omega$ then
\begin{align}
f(\ux)&=\frac{(3\mu+\lambda)^2}{4\mu (2\mu+\lambda)}(\mathcal{C}_\psi^lf)(\ux)-\frac{(\mu+\lambda)^2}{4\mu (2\mu+\lambda)}(\mathcal{C}_\psi^rf)(\ux)+\mathcal{C}_{\varphi,\psi}^\dagger\left(\frac{\mu+\lambda}{2}f\pD+\frac{3\mu+\lambda}{2}\pD f\right)(\ux)\nonumber\\
&\quad+\frac{(3\mu+\lambda)(\mu+\lambda)}{4\mu (2\mu+\lambda)}\left(\int_{\partial\Omega}K_\psi(\uy-\ux)f(\uy)n_\psi(\uy)d\uy-\int_{\partial\Omega}n_\psi(\uy)f(\uy)K_\psi(\uy-\ux)d\uy\right).\label{Cauchyf}
\end{align}
\end{corollary}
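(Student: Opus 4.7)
The plan is to obtain the corollary as an immediate specialization of the Borel--Pompeiu formula \eqref{bpform} established in the preceding theorem. The key observation is that in \eqref{bpform} the only term on the left-hand side that is a volume integral over $\Omega$ is $\mathcal{T}_{\varphi,\psi}^\dagger(\La f)(\ux)$, while every other term is a boundary expression. Under the hypothesis $\La f \equiv 0$ in $\Omega$, this volume term should vanish and the identity \eqref{bpform} should collapse to precisely \eqref{Cauchyf}.

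First I would address the small discrepancy in smoothness hypotheses: the theorem is stated for $f\in C^2(\overline{\Omega})$, whereas the corollary assumes only $f\in C^2(\Omega)\cap C^1(\overline{\Omega})$. This gap is bridged by the remark immediately following the theorem, which relaxes the assumption to $f\in C^2(\Omega)\cap C^1(\overline{\Omega})$ together with the integrability condition $\int_\Omega \|\La f(\uy)\|\,d\uy<+\infty$. Since $\La f=0$ in $\Omega$ by hypothesis, this integrability condition is trivially satisfied and the extended Borel--Pompeiu formula applies at every $\ux\in\Omega$.

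Next I would evaluate $\mathcal{T}_{\varphi,\psi}^\dagger(\La f)(\ux)$ at an arbitrary $\ux\in\Omega$. By its very definition this operator is the linear combination
\[
-\frac{\mu+\lambda}{2\mu(2\mu+\lambda)}\,\mathcal{T}_{\varphi,\psi}^\textsf{infra}(\La f)(\ux)+\frac{3\mu+\lambda}{2\mu(2\mu+\lambda)}\,\mathcal{T}_{\varphi,\psi}(\La f)(\ux),
\]
and both constituents are genuine integral operators applied to the zero function, hence vanish identically on $\Omega$. Dropping this vanishing term from \eqref{bpform} and transposing yields \eqref{Cauchyf} verbatim. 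I do not anticipate any substantive obstacle: the entire argument is a direct read-off of the Borel--Pompeiu formula combined with the remark on its extended validity, and the only point that genuinely deserves comment is the verification of the weakened integrability hypothesis.
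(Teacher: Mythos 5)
Your proposal is correct and follows exactly the route the paper intends: the corollary is obtained by setting $\La f=0$ in the Borel--Pompeiu formula \eqref{bpform}, so that $\mathcal{T}_{\varphi,\psi}^\dagger(\La f)$ vanishes, with the weaker regularity hypothesis justified by the remark that \eqref{bpform} remains valid for $f\in C^2(\Omega)\cap C^1(\overline{\Omega})$ with $\int_\Omega\|\Lay f(\uy)\|\,d\uy<+\infty$, a condition trivially met here. Your explicit verification of that integrability condition is the only point the paper leaves implicit, and it is handled correctly.
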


\section{Jump problem}\label{section3}
   
The Cauchy formula \eqref{Cauchyf} says that if there exists a solution of the Dirichlet problem 
\begin{equation}\label{DP}
\Biggl\{
\begin{array}{rl}
\La F=0\,\,\mbox{in}\,\,\Omega,\\
F=f\,\,\mbox{on}\,\,\partial\Omega,
\end{array} 
\end{equation}
in the class $C^2(\Omega)\cap C^1(\overline{\Omega})$, then a solution of \eqref{DP} can be represented by
\begin{align}
F(\ux)&=\frac{(3\mu+\lambda)^2}{4\mu (2\mu+\lambda)}(\mathcal{C}_\psi^lf)(\ux)-\frac{(\mu+\lambda)^2}{4\mu (2\mu+\lambda)}(\mathcal{C}_\psi^rf)(\ux)+\mathcal{C}_{\varphi,\psi}^\dagger\left(\frac{\mu+\lambda}{2}f\pD+\frac{3\mu+\lambda}{2}\pD f\right)(\ux)\nonumber\\
&\quad+\frac{(3\mu+\lambda)(\mu+\lambda)}{4\mu (2\mu+\lambda)}\left(\int_{\partial\Omega}K_\psi(\uy-\ux)f(\uy)n_\psi(\uy)d\uy-\int_{\partial\Omega}n_\psi(\uy)f(\uy)K_\psi(\uy-\ux)d\uy\right).\label{Cauchyf1}
\end{align}

However, because of the absence of a maximum principle for null solutions of generalized Lam\'e-Navier systems, the usual uniqueness proof in the case of the Dirichlet problem \eqref{DP} loses its validity and other solutions in the class $C^2(\Omega)\cap C(\overline{\Omega})$ may exist.  See the following example, which is essentially the same as the one presented in \cite{DAS6}, but we have decided to include it here for the sake of completeness.

\begin{example}
The vector field $F(\ux)=(6x_1^2+x_2^2+x_3^2-1)e_2$ in $\R^3$ is a non-zero solution of the homogeneous Dirichlet problem 
\begin{equation}\label{DPIcont}
\Biggl\{
\begin{array}{rl}
0.2\cdot\hD F(\ux)\pux+0.3\cdot\hD\pux F(\ux)=0 &\,\,\mbox{\text{in}}\,\,\Omega,\\F(\ux)=0&\,\,\mbox{\text{on}}\,\,\partial\Omega,
\end{array} 
\end{equation}
where $\varphi=\{-e_1,e_2,e_3\}$ and $\Omega=\{\ux\in \R^3: 6x_1^2+x_2^2+x_3^2<1\}$.

\end{example}
The above counterexample inspires the consideration of different boundary value problems, where such a difficulty may be overcome.

For simplicity, let us introduce the first-order differential operator $\mathcal{M}:=\left(\frac{\mu+\lambda}{2}\right)(.)\pD+\frac{3\mu+\lambda}{2}\pD(.)$ and consider the following jump problem for the Lam\'e-Navier system \eqref{gLNs}:
\begin{equation}\label{JP2}
\begin{cases}
\La F=0,\,&\ux\in\Omega_+\cup\Omega_-,\\
{[F]}^+(\ux)-{[F]}^-(\ux)=f(\ux),\,&\ux\in\partial\Omega,\\
[\mathcal{M}F]^+-[\mathcal{M}F]^-=f_1(\ux),\,&\ux\in\partial\Omega,\\
F(\infty)=(\mathcal{M}F)(\infty)=0,
\end{cases}
\end{equation}
where $f,f_1$ are assumed in $C^{0,\nu}(\partial\Omega)$. 
\begin{theorem}
Let $\partial\Omega$ be sufficiently smooth. Then the jump problem \eqref{JP2} has a unique solution given by
\begin{align}
F(\ux)&=\frac{(3\mu+\lambda)^2}{4\mu (2\mu+\lambda)}(\mathcal{C}_\psi^lf)(\ux)-\frac{(\mu+\lambda)^2}{4\mu (2\mu+\lambda)}(\mathcal{C}_\psi^rf)(\ux)+(\mathcal{C}_{\varphi,\psi}^\dagger f_1)(\ux)\nonumber\\
&\quad+\frac{(3\mu+\lambda)(\mu+\lambda)}{4\mu (2\mu+\lambda)}\left(\int_{\partial\Omega}K_\psi(\uy-\ux)f(\uy)n_\psi(\uy)d\uy-\int_{\partial\Omega}n_\psi(\uy)f(\uy)K_\psi(\uy-\ux)d\uy\right).\label{JPf1}
\end{align}
\end{theorem}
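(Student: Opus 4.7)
The plan is to verify by direct computation that formula \eqref{JPf1} satisfies each of the four requirements of \eqref{JP2} (existence), and then to deduce uniqueness via a Liouville-type argument based on \eqref{bpform}. The formula is precisely the Cauchy representation \eqref{Cauchyf} in which the Dirichlet data $f$ is reinterpreted as the jump of $F$ and the Neumann-type combination $\mathcal{M}f$ is replaced by the independent jump datum $f_1$; this strongly suggests that the existence verification should mirror, step for step, the derivation of \eqref{bpform}.

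For existence, I would first check that $\La F = 0$ on $\R^m\setminus\partial\Omega$. The summand $\mathcal{C}_{\varphi,\psi}^\dagger f_1$ is annihilated by $\La$ by the remark following the Proposition in Section~2. For the remaining four pieces (the two standard $\psi$-Cauchy transforms and the two non-standard $K_\psi$-integrals), I would apply $\La$ term by term, using the two-sided $\psi$-monogenicity of $K_\psi(\uy-\ux)$ together with the product identity $\hD(g\pD)=(\hD g)\pD$; the coefficients $\frac{(3\mu+\lambda)^2}{4\mu(2\mu+\lambda)}$, $\frac{(\mu+\lambda)^2}{4\mu(2\mu+\lambda)}$, and $\frac{(3\mu+\lambda)(\mu+\lambda)}{4\mu(2\mu+\lambda)}$ are rigged so that the Clifford-multiplicative cross-terms cancel, paralleling the inner/outer product algebra in the Borel--Pompeiu computation. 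Decay of both $F$ and $\mathcal{M} F$ at infinity follows from the $O(\|\ux\|^{1-m})$ size of the kernels $K_\varphi$, $K_\psi$, $E_1$ and their derivatives.

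The jump conditions are handled with Plemelj--Sokhotski-type identities applied term by term. For $[F]^+-[F]^-$, the classical jumps of $\mathcal{C}_\psi^l f$ and $\mathcal{C}_\psi^r f$ combine through the algebraic identity $(3\mu+\lambda)^2-(\mu+\lambda)^2 = 4\mu(2\mu+\lambda)$ to contribute exactly $f$; the remaining pieces—the two non-standard $K_\psi$-integrals together with $\mathcal{C}_{\varphi,\psi}^\dagger f_1$—must be verified to produce zero net jump. For $[\mathcal{M} F]^+-[\mathcal{M} F]^-$, the reductions $\pD\mathcal{C}_{\varphi,\psi}=\mathcal{C}_\varphi^l$ and $\mathcal{C}_{\varphi,\psi}^\textsf{infra}\pD=\mathcal{C}_\varphi^l$ from Section~2, together with the cross identity $\pD\mathcal{C}_{\varphi,\psi}^\textsf{infra}=\mathcal{C}_{\varphi,\psi}\pD$, collapse $\mathcal{M}\mathcal{C}_{\varphi,\psi}^\dagger f_1$ to exactly $\mathcal{C}_\varphi^l f_1$, whose Plemelj jump is $f_1$; the remaining pieces again combine to a zero jump.

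For uniqueness, if $F$ solves the homogeneous problem, then $F$ and $\mathcal{M} F$ extend continuously across $\partial\Omega$, so $F$ is a classical $\La$-null function on all of $\R^m$ with $F$ and $\mathcal{M} F$ vanishing at infinity. Applying \eqref{bpform} on a ball $B_R\supset\overline\Omega$ and letting $R\to\infty$, every boundary integral vanishes because the kernels are $O(R^{1-m})$, $|\partial B_R|=O(R^{m-1})$, and the data decay; hence $F\equiv 0$. The main obstacle is the bookkeeping in the jump step: four distinct layer potentials contribute, and one must verify that the non-standard $K_\psi$-integrals together with $\mathcal{C}_{\varphi,\psi}^\dagger f_1$ cancel in $[F]^+-[F]^-$ but produce the full $f_1$ once $\mathcal{M}$ is applied—this parallels the algebraic cancellations exploited in the Borel--Pompeiu derivation and is where essentially all the computational work is located.
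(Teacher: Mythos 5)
Your existence argument is essentially the paper's: one computes $\mathcal{M}F=\mathcal{C}_\varphi^l f_1$ by showing that the two standard Cauchy transforms and the two non-standard $K_\psi$-integrals cancel pairwise under $\mathcal{M}$ (the paper carries out exactly the kernel-derivative cancellations you defer to), whence $\La F=\hD[\mathcal{M}F]=0$; the jumps then follow from Plemelj--Sokhotski together with the observation that the kernels of $\mathcal{C}_{\varphi,\psi}^\dagger$ are only weakly singular and hence contribute no jump. As a sketch of existence this is faithful, although note that $E_1$ and $K_{\varphi,\psi}$ are $O(\|\ux\|^{2-m})$, not $O(\|\ux\|^{1-m})$ (harmless for $F(\infty)=0$ since $m>2$, but relevant below).

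The genuine gap is in your uniqueness argument. First, continuity of $F$ and $\mathcal{M}F$ across $\partial\Omega$ does not by itself make $F$ a classical $\La$-null function on all of $\R^m$; a removability theorem is required, and the paper obtains it by applying the Painlev\'e theorem to the \emph{first-order} quantity $\mathcal{M}F$, which is left $\varphi$-hyperholomorphic off $\partial\Omega$ and continuous across it. Second, and more seriously, your limit $R\to\infty$ in \eqref{bpform} does not dispose of the term $\mathcal{C}_{\varphi,\psi}^\dagger(\mathcal{M}F)$ taken over $\partial B_R$: its kernels decay only like $R^{2-m}$, so against $|\partial B_R|\sim R^{m-1}$ this contribution is $O\bigl(R\,\sup_{\partial B_R}\|\mathcal{M}F\|\bigr)$, which the bare hypothesis $(\mathcal{M}F)(\infty)=0$ does not force to vanish. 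The missing step --- and the one the paper actually performs --- is to conclude \emph{first} that $\mathcal{M}F\equiv 0$ on $\R^m$ by Painlev\'e plus Liouville for monogenic functions; only then does the offending boundary term disappear, after which either your $R\to\infty$ limit (now only $K_\psi$-kernels of size $O(R^{1-m})$ remain, giving $O(\sup_{\partial B_R}\|F\|)\to 0$) or the paper's gradient estimate $\|\partial_{x_j}F(\ux_0)\|\le RC/\rho$ on large balls completes the proof that $F\equiv 0$.
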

\begin{proof}
We obtain
\begin{align*}
(\mathcal{M}F)(\ux)&=\frac{(\mu+\lambda)(3\mu+\lambda)^2}{8\mu (2\mu+\lambda)}[(\mathcal{C}_\psi^lf)(\ux)]\pD-\frac{(3\mu+\lambda)(\mu+\lambda)^2}{8\mu (2\mu+\lambda)}\pD[(\mathcal{C}_\psi^rf)(\ux)]+(\mathcal{C}_\varphi^lf_1)(\ux)\\
&\quad+\frac{(3\mu+\lambda)(\mu+\lambda)^2}{8\mu (2\mu+\lambda)}\left[\left(\int_{\partial\Omega}K_\psi(\uy-\ux)f(\uy)n_\psi(\uy)d\uy\right)\pD\right]\\
&\quad+\frac{(3\mu+\lambda)^2(\mu+\lambda)}{8\mu (2\mu+\lambda)}\left[-\pD\left(\int_{\partial\Omega}n_\psi(\uy)f(\uy)K_\psi(\uy-\ux)d\uy\right)\right]\\
&=(C_\varphi^lf_1)(\ux),
\end{align*}
whence $\La F(\ux)=\hD[\mathcal{M}F](\ux)=0$ for $\ux\in\Omega_+\cup\Omega_-$. In this last calculation, we have used that
\begin{align*}
&[(\mathcal{C}_\psi^lf)(\ux)]\pD-\pD\left(\int_{\partial\Omega}n_\psi(\uy)f(\uy)K_\psi(\uy-\ux)d\uy\right)\\&=\sum_{j=1}^m\int_{\partial\Omega}\frac{1}{\sigma_m}\left[-m\|\uy-\ux\|^{-m-2}(\uy_\psi-\ux_\psi)(y_j-x_j)+\psi_j\|\uy-\ux\|^{-m}\right]n_\psi(\uy)f(\uy)\psi_jd\uy\\
&\quad-\sum_{j=1}^m\int_{\partial\Omega}\frac{1}{\sigma_m}\psi_jn_\psi(\uy)f(\uy)\left[-m\|\uy-\ux\|^{-m-2}(\uy_\psi-\ux_\psi)(y_j-x_j)+\psi_j\|\uy-\ux\|^{-m}\right]d\uy\\
&=0,
\end{align*}
\begin{align*}
&-\pD[(\mathcal{C}_\psi^rf)(\ux)]+\left(\int_{\partial\Omega}K_\psi(\uy-\ux)f(\uy)n_\psi(\uy)d\uy\right)\pD\\
&=-\sum_{j=1}^m\int_{\partial\Omega}\frac{1}{\sigma_m}\psi_jf(\uy)n_\psi(\uy)\left[-m\|\uy-\ux\|^{-m-2}(\uy_\psi-\ux_\psi)(y_j-x_j)+\psi_j\|\uy-\ux\|^{-m}\right]d\uy\\
&\quad+\sum_{j=1}^m\int_{\partial\Omega}\frac{1}{\sigma_m}\left[-m\|\uy-\ux\|^{-m-2}(\uy_\psi-\ux_\psi)(y_j-x_j)+\psi_j\|\uy-\ux\|^{-m}\right]f(\uy)n_\psi(\uy)\psi_jd\uy\\
&=0.
\end{align*}

Now we will check the boundary conditions. The integrals of transform $\mathcal{C}_{\varphi,\psi}^\dagger$ become weakly singular as $\ux$ approaches to $\partial\Omega$, which is easy to check. Consequently, all these
integrals do not have jump discontinuity in $\partial\Omega$. Combining this fact with the classical Plemelj–Sokhotski formulas applied to the Cliffordian–Cauchy-type integrals in \eqref{JPf1}, we conclude that 
\begin{align*}
[F]^+(\ux)-[F]^-(\ux)=\frac{(3\mu+\lambda)^2}{4\mu (2\mu+\lambda)}f(\ux)-\frac{(\mu+\lambda)^2}{4\mu (2\mu+\lambda)}f(\ux)=f(\ux),\quad\ux\in\partial\Omega.
\end{align*}
On the other hand, we have
\begin{align*}
[\mathcal{M}F]^+-[\mathcal{M}F]^-&=\left(\frac{\mu+\lambda}{2}\right)({[F\pD]}^+(\ux)-{[F\pD]}^-(\ux))+\left(\frac{3\mu+\lambda}{2}\right)({[\pD F]}^+(\ux)-{[\pD F]}^-(\ux))\\&=[C_\varphi^lf_1]^+-[C_\varphi^lf_1]^-\\&=f_1(\ux),\quad\ux\in\partial\Omega.
\end{align*}
It is a matter of routine to check the vanishing conditions $F(\infty) =(\mathcal{M}F)(\infty)=0$.

To prove the uniqueness of the solution, we are reduced to prove that the problem
\begin{equation}\label{JP2hom}
\begin{cases}
\La F=0,\,&\ux\in\Omega_+\cup\Omega_-,\\
{[F]}^+(\ux)-{[F]}^-(\ux)=0,\,&\ux\in\partial\Omega,\\
[\mathcal{M}F]^+(\ux)-[\mathcal{M}F]^-(\ux)=0,\,&\ux\in\partial\Omega,\\
F(\infty)=(\mathcal{M}F)(\infty)=0,
\end{cases}
\end{equation}
allows only the null solution $F\equiv 0$.  By a combination
of Painleve and Liouville theorems in Clifford analysis (see \cite{BDS}), we obtain that $(\mathcal{M}F)(\ux)=0$ in $\R^m$. Thanks to the decay condition, we can say that $F$ is bounded in $\R^m$, that is, $\|F(\ux)\|\leq R$ for all $\ux\in\R^m$. Let $\ux_0$ be an arbitrary point in $\R^m$ and let $\rho>0$. Using the Cauchy integral representation formula, we have that in $B_\rho(\ux_0)=\{\ux\in\R^m: \|\ux-\ux_0\|<\rho\}$
\begin{align*}
F(\ux)&=\frac{(3\mu+\lambda)^2}{4\mu (2\mu+\lambda)}\int_{\partial B_\rho(\ux_0)}K_\psi(\uy-\ux)n_\psi(\uy)F(\uy)d\uy-\frac{(\mu+\lambda)^2}{4\mu (2\mu+\lambda)}\int_{\partial B_\rho(\ux_0)}F(\uy)n_\psi(\uy)K_\psi(\uy-\ux)d\uy\\&\quad+\frac{(3\mu+\lambda)(\mu+\lambda)}{4\mu (2\mu+\lambda)}\left(\int_{\partial B_\rho(\ux_0)}K_\psi(\uy-\ux)F(\uy)n_\psi(\uy)d\uy-\int_{\partial B_\rho(\ux_0)}n_\psi(\uy)F(\uy)K_\psi(\uy-\ux)d\uy\right). 
\end{align*}
The Clifford-Cauchy kernel $K_\psi(\uy-\ux)$ is arbitrarily often continuously differentiable with
respect to the $x_j$, so one can differentiate under the integral sign and get the integral for
the derivatives of $F$. We then use the formula of \cite[Corollary 7.28, p.143]{GHS} for the first derivative of $F$
$$\|\partial_{x_j}F(\ux_0)\|\leq\frac{RC}{\rho},$$
where $C$ is a constant that depends on the dimension $m$. As $\rho$ is here an arbitrary real number, for $\rho\to\infty$ we can conclude that $\partial_{x_j}F(\ux_0)=0$. Because of the arbitrary choice of $\ux_0$ all derivatives $\partial_{x_j}F$ are zero at all points in $\R^m$ and $F$ is constant. Since $F(\infty)=0$, then necessarily $F\equiv 0$, and this completes the proof.  
\end{proof}

When one omits the smoothness conditions on $\partial\Omega$, the function \eqref{JPf1} does not represent in general a solution of \eqref{JP2} or, which is even more inconvenient, it loses its usual sense. The natural question arises whether it is possible to construct a solution of \eqref{JP2}, analogous to \eqref{JPf1}, where this time $\Omega$ is assumed to be a domain with fractal boundary $\partial\Omega$. Fractals are used to model complex phenomena and repetitive patterns found in nature and in various disciplines, such as physics, biology, engineering, and art. They are used to understand, analyze, and represent the complexity of structures, as well as to compress data, create visual effects, and develop new technologies (see, for example \cite{Be,Kar,Tu}). It is for these reasons that it is not unreasonable to consider the above problem  under such a general geometric conditions.

In the case of fractal domains, the merely H\"older continuity of the boundary traces $f,f_1$ does not, in general, offer the possibility of constructing the solution of \eqref{JP2}. The method we will mainly sketch below has its roots in the seminal work of B. Kats \cite{Kats} (see also \cite{AAB}).

Firstly, we define an appropriate way of measuring the fractality of $\partial\Omega$. In this direction we prefer the concept of $d$-summable boundary, introduced by Harrison and Norton in \cite{HN}. We say that $\partial\Omega$ is $d$-summable for some $m-1<d<m$, if the improper integral
\[
\int_0^1 N_{\partial\Omega}(\tau) \, \tau^{d-1} \, d\tau
\]
converges, where $ N_{\partial\Omega}(\tau)$ stands for the minimal number of balls of radius $\tau$ needed to cover $\partial\Omega$. 

Secondly, we assume that $f$ belongs (component-wisely) to the so-called higher order Lipschitz class $\Li(1+\nu,\partial\Omega)$ introduced by Whitney \cite{whitney} and more extensively studied in \cite{stein}. Roughly speaking, this means that there exist a jet $\{f,f^{\bj},\,|\bj|=1\}$ 
such that the field of polynomials (in the usual multi-index notation) 
\[f(\uy)+\sum\limits_{|\bj|=1}f^{\bj}(\uy)(\ux-\uy)^{\bj},\,\,\uy\in\partial\Omega\]
is the field of Taylor polynomials of a $C^{1,\nu}$-function in $\R^m$. A classical theorem of Whitney \cite{whitney} shows that such a function $f$ may be extended to a $C^{1,\nu}$-smooth function $\tilde{f}$ in $\R^m$, satisfying moreover that
\[
\|\partial_{\ux}^{{\bj}}\tilde{f}(\ux) \| \le c\,[\dist(\ux,\partial\Omega)]^{\nu-1},\,\,|\bj|=2,\,\,\ux\in\R^m\setminus\partial\Omega.
\]
Finally, if $\partial\Omega$ is $d$-summable and $f$ is assumed to be in $\Li(1+\nu,\partial\Omega)$ with $\nu>\frac{d}{m}$, then it follows from \cite[Lemma 4.1]{AAB} that $\La\tilde{f}$ belongs to $L^p(\Omega)$ with $p=\frac{m-d}{1-\nu}>m$. Consequently, the functions $\mathcal{T}_{\varphi,\psi}^{\dagger}(\La \tilde{f})$ and $\mathcal{T}_\varphi^l(\La\tilde{f})$ are continuous in the whole space $\R^m$ (see \cite[Lemma 4.2]{AAB}).

Summarizing, we are led to the following
\begin{theorem}
Let $f\in\Li(1+\nu,\partial\Omega)$ and let $\partial\Omega$ be $d$-summable  with $\nu>\displaystyle\frac{d}{m}$. Then the jump problem
\begin{equation}\label{JP2fractal}
\begin{cases}
\La F=0,\,&\ux\in\Omega_+\cup\Omega_-,\\
{[F]}^+(\ux)-{[F]}^-(\ux)=f(\ux),\,&\ux\in\partial\Omega,\\
[\mathcal{M}F]^+(\ux)-[\mathcal{M}F]^-=(\mathcal{M}\tilde{f})(\ux),\,&\ux\in\partial\Omega,\\
F(\infty)=(\mathcal{M}F)(\infty)=0,
\end{cases}
\end{equation}
has a solution given by
\begin{equation}\label{SFJP}
F(\ux)=\tilde{f}(\ux)\chi_\Omega(\ux)-\mathcal{T}_{\varphi,\psi}^{\dagger}({\La\tilde{f}})(\ux),
\end{equation}
where $\chi_\Omega$ stands for the characteristic function of $\Omega$.
\end{theorem}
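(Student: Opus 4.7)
My plan is to plug the candidate
\[
F(\ux)=\tilde{f}(\ux)\chi_\Omega(\ux)-\mathcal{T}_{\varphi,\psi}^{\dagger}(\La\tilde{f})(\ux)
\]
directly into the four conditions of \eqref{JP2fractal}, relying on two facts already established in the excerpt: Proposition 1, which gives $\La(\mathcal{T}_{\varphi,\psi}^{\dagger}g)(\ux)=g(\ux)$ for $\ux\in\Omega$, together with the identity $\mathcal{M}(\mathcal{T}_{\varphi,\psi}^{\dagger}g)=\mathcal{T}_\varphi^{l}g$ extracted along the way in its proof; and Lemmas 4.1--4.2 of \cite{AAB}, quoted just before the theorem, which guarantee that under our hypotheses $\La\tilde{f}\in L^p(\Omega)$ with $p>m$ and that both $\mathcal{T}_{\varphi,\psi}^{\dagger}(\La\tilde{f})$ and $\mathcal{T}_\varphi^{l}(\La\tilde{f})$ are continuous on all of $\R^m$.

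For the PDE, on $\Omega_+$ the factor $\chi_\Omega$ equals one, so Proposition 1 yields $\La F=\La\tilde{f}-\La\tilde{f}=0$. On $\Omega_-$ the first summand vanishes, and since for $\ux\notin\overline{\Omega}$ the integrands defining $\mathcal{T}_{\varphi,\psi}^{\dagger}$ are smooth in $\ux$, the computation used in the proof of Proposition 1 applies without the distributional term arising at $\uy=\ux$, giving $\La(\mathcal{T}_{\varphi,\psi}^{\dagger}(\La\tilde{f}))(\ux)=0$ for $\ux\in\Omega_-$. For the two transmission conditions, continuity of $\mathcal{T}_{\varphi,\psi}^{\dagger}(\La\tilde{f})$ across $\partial\Omega$ leaves $\tilde{f}\chi_\Omega$ as the sole source of jump, so $[F]^{+}-[F]^{-}=\tilde{f}|_{\partial\Omega}=f$. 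Applying $\mathcal{M}$ to $F$ and using $\mathcal{M}\mathcal{T}_{\varphi,\psi}^{\dagger}=\mathcal{T}_\varphi^{l}$ turns the second summand into the continuous function $\mathcal{T}_\varphi^{l}(\La\tilde{f})$, so the jump of $\mathcal{M}F$ across $\partial\Omega$ is exactly $\mathcal{M}\tilde{f}$. The decay at infinity is immediate: outside a ball containing $\overline{\Omega}$ one has $F(\ux)=-\mathcal{T}_{\varphi,\psi}^{\dagger}(\La\tilde{f})(\ux)$ and $\mathcal{M}F(\ux)=-\mathcal{T}_\varphi^{l}(\La\tilde{f})(\ux)$, and both decay since $\La\tilde{f}$ is supported in the bounded set $\overline{\Omega}$ and the kernels decay.

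The substantive difficulty is entirely absorbed into Lemmas 4.1--4.2 of \cite{AAB}. The former exploits the Whitney bound $\|\partial_{\ux}^{\bj}\tilde{f}(\ux)\|\leq c\,\dist(\ux,\partial\Omega)^{\nu-1}$ for $|\bj|=2$ together with a covering of the $\tau$-neighborhood of $\partial\Omega$ by $N_{\partial\Omega}(\tau)$ balls of radius $\tau$, and the $d$-summability of $\partial\Omega$ is precisely what forces the resulting integral to converge; the threshold $\nu>d/m$ is exactly what lifts the integrability exponent above the dimension, giving $p=(m-d)/(1-\nu)>m$. The latter lemma then upgrades the associated weakly singular volume potentials to continuous functions on the whole of $\R^m$, which is the mechanism that makes the fractal boundary transparent to the jump argument. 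Once those two lemmas are invoked, the verification above is purely algebraic, and the existence of a solution via \eqref{SFJP} follows at once.
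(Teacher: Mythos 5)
Your proposal is correct and follows essentially the same route as the paper: verify $\La F=0$ via $\La(\mathcal{T}_{\varphi,\psi}^{\dagger}g)=g$ in $\Omega_+$ (and $0$ in $\Omega_-$) together with $\mathcal{M}(\mathcal{T}_{\varphi,\psi}^{\dagger}g)=\mathcal{T}_\varphi^{l}g$, deduce both jump conditions from the continuity of $\mathcal{T}_{\varphi,\psi}^{\dagger}(\La\tilde{f})$ and $\mathcal{T}_\varphi^{l}(\La\tilde{f})$ guaranteed by Lemmas 4.1--4.2 of \cite{AAB} under the hypotheses $\nu>d/m$ and $d$-summability, and check decay at infinity directly. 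Your write-up in fact supplies slightly more detail than the paper's sketch, but no new ideas are needed or introduced.
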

\pf That $F$ satisfies $\La F=0$ may be verified directly on the basis of the formulas  
\[
\mathcal{M}\bigg[\mathcal{T}_{\varphi,\psi}^{\dagger}\La \tilde{f}\bigg]=\mathcal{T}_\phi^l(\La \tilde{f}),\,\,\La[\mathcal{T}_{\varphi,\psi}^\dagger(\La\tilde{f})]=\hD[\mathcal{T}_\varphi^l(\La\tilde{f})]=\begin{cases}\La \tilde{f}\,&\mbox{in}\,\,\Omega_+\\0\,&\mbox{in}\,\,\Omega_-.\end{cases}
\]
The next thing we have to do is to prove that $F$ given by \eqref{SFJP} satisfies the jump conditions in \eqref{JP2fractal}, which is simply deduced from the previously mentioned continuity of $\mathcal{T}_{\varphi,\psi}^{\dagger}(\La\tilde{f})$ and $\mathcal{T}_\varphi^l(\La\tilde{f})$. On the other hand, it is a matter of routine to check the vanishing conditions $F(\infty)=(\mathcal{M}F)(\infty)=0$.\qed  

It seems that the uniqueness of the solution of \eqref{JP2fractal} is closely connected with the removability of $\partial\Omega$ for the class of $\R_{0,m}$-valued continuous functions, i.e. Painleve theorem. Unfortunately, this is not the case for general $d$-summable boundary $\partial\Omega$. Nevertheless, the picture of uniqueness of the jump problem \eqref{JP2fractal} for general $d$-summable boundary can be worked out in much the same way, the only difference being in the usage of a Dolzhenko theorem proved in \cite{ADB} in place of Painleve one. We
close the paper by stating this result in the following theorems, where the symbol $\dim_H(\partial\Omega)$ denotes the Hausdorff dimension of
$\partial\Omega$.
\begin{theorem}
Let be $f\in \Li(1+\nu,\partial\Omega)$, with $\nu>\frac{d}{m}$ and let
$$\dim_H(\partial\Omega)-m+1<\beta<\frac{m\nu-d}{m-d}.$$
Then the function given by \eqref{SFJP} is the unique solution of the jump problem \eqref{JP2fractal} which belongs to the class $$\Li_\beta:=\{g:\mathcal{M}^ig\in\Li(\overline{\Omega_+},\beta)\cap\Li(\overline{\Omega_-},\beta),\;\mathcal{M}^ig(\infty)=0,\;i=0,1\}.$$
\end{theorem}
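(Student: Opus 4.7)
The plan is to follow the uniqueness scheme from the previous smooth-boundary theorem, with the essential modification that the Painleve theorem is replaced by the Dolzhenko-type removability result from \cite{ADB} applicable to $d$-summable sets. Since existence has already been settled in the preceding theorem, I only need to show that a member of $\Li_\beta$ solving the homogeneous version of \eqref{JP2fractal} (with $f=0$ and $\mathcal{M}\tilde{f}=0$) must vanish identically.

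The first key step is to obtain $\mathcal{M}F\equiv 0$ on $\R^m$. Setting $G:=\mathcal{M}F$, the identity $\hD G=\La F=0$ makes $G$ left $\varphi$-hyperholomorphic on each component $\Omega_\pm$. The homogeneous jump condition $[G]^+-[G]^-=0$ combined with $G\in\Li(\overline{\Omega_+},\beta)\cap\Li(\overline{\Omega_-},\beta)$ promotes $G$ to a H\"older continuous function of exponent $\beta$ on all of $\R^m$. The hypothesis $\beta>\dim_H(\partial\Omega)-m+1$ is precisely the threshold that allows the Dolzhenko-type theorem of \cite{ADB} to declare $\partial\Omega$ removable for $\Li(\beta)$ left $\varphi$-hyperholomorphic functions. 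Hence $G$ extends $\varphi$-hyperholomorphically to the whole of $\R^m$, and with the decay $G(\infty)=0$ the Liouville theorem of \cite{BDS} yields $G\equiv 0$.

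The second step is to conclude $F\equiv 0$. By the same promotion, $F$ itself is H\"older continuous of exponent $\beta$ on $\R^m$, and the identity $\La F=0$ holds there (indeed the stronger global identity $\mathcal{M}F\equiv 0$ has just been established). I then reuse verbatim the argument of the smooth-boundary uniqueness proof: apply the Cauchy representation \eqref{Cauchyf} on a ball $B_\rho(\ux_0)$, differentiate under the integral sign, invoke the pointwise kernel bound from \cite[Corollary 7.28]{GHS} to obtain $\|\partial_{x_j}F(\ux_0)\|\leq RC/\rho$, and let $\rho\to\infty$ to find that every partial derivative of $F$ vanishes on $\R^m$. The decay $F(\infty)=0$ then forces $F\equiv 0$.

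The main obstacle I anticipate is verifying that the Dolzhenko-type theorem of \cite{ADB} is applicable in the form used in the first step---that is, with the specific quantitative relation between the Lipschitz exponent $\beta$, the Hausdorff dimension of $\partial\Omega$, and the $d$-summability condition. The lower bound $\beta>\dim_H(\partial\Omega)-m+1$ is the classical removability threshold for H\"older continuous solutions of first order elliptic systems, so this should go through; the upper bound $\beta<(m\nu-d)/(m-d)$ plays no role in uniqueness but ensures that the solution \eqref{SFJP} from the existence theorem actually belongs to $\Li_\beta$, so that existence and uniqueness live in the same class.
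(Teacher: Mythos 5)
Your proposal is correct and follows exactly the route the paper intends: the paper gives no detailed proof of this theorem, remarking only that uniqueness is "worked out in much the same way" as the smooth-boundary case with the Dolzhenko-type removability theorem of \cite{ADB} replacing the Painlev\'e theorem, which is precisely your two-step argument (remove $\partial\Omega$ for $\mathcal{M}F$ under the condition $\beta>\dim_H(\partial\Omega)-m+1$, then run the Liouville-type ball estimate). Your observation that the upper bound $\beta<(m\nu-d)/(m-d)$ serves only to place the explicit solution \eqref{SFJP} in the class $\Li_\beta$ is also consistent with the paper's setup.
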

Following the ideas in \cite{tamayo}, we can arrive at a new version involving the refined Marcinkiewicz exponent:
$$\mathfrak{ m}^*(\partial\Omega):=\inf\{\max\{\mathfrak{m}^+(\partial\Omega,\ux),\mathfrak{m}^-(\partial\Omega,\ux)\},\ux\in\partial\Omega\},$$
where
$$\mathfrak{m}^+(\partial\Omega,\ux)=\left\{p:\lim_{r\to 0}\int_{(B_r(\ux)\setminus\partial\Omega)\cap\Omega_+}\frac{d\uy}{\dist^p(\ux,\partial\Omega)}<\infty\right\},$$
and
$$\mathfrak{m}^-(\partial\Omega,\ux)=\left\{p:\lim_{r\to 0}\int_{(B_r(\ux)\setminus\partial\Omega)\cap\Omega^*}\frac{d\uy}{\dist^p(\ux,\partial\Omega)}<\infty\right\}.$$
Here, $\Omega^*:=\Omega_-\cap\{\ux:\|\ux\|<r\}$, where $r$ is selected such that $\partial\Omega$ is wholly contained inside the ball of radius $r$.
\begin{theorem}
Let $f\in\Li(\partial\Omega,1+\nu)$, with $\nu>1-\frac{\mathfrak{m}^*(\partial\Omega)}{m}$ and
$$\dim_H(\partial\Omega)-m+1<\beta<1-\frac{m(1-\nu)}{\mathfrak{m}^*(\partial\Omega)}.$$
Then the function given by \eqref{SFJP} is the unique solution of the jump problem \eqref{JP2fractal}, such that $F(\ux)$ and $(\mathcal{M}F)(\ux)$ belong to the classes $\Li(\overline{\Omega_+},\beta)$ and $\Li(\overline{\Omega_-},\beta)$.
\end{theorem}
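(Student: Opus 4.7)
Existence of a solution $F$ given by \eqref{SFJP} is already supplied by the preceding theorem, so what is genuinely new here is (i) the sharper Lipschitz regularity of $F$ and $\mathcal{M}F$ on $\overline{\Omega_+}$ and $\overline{\Omega_-}$ under the refined Marcinkiewicz hypothesis, and (ii) the uniqueness statement inside this class. My plan is to handle (i) by re-running the estimates that put $\La\tilde{f}$ into $L^p(\Omega)$, but with the $d$-summability constant replaced by $\mathfrak{m}^*(\partial\Omega)$ as in \cite{tamayo}, and to handle (ii) by a two-step Dolzhenko--Liouville argument: first applied to $\mathcal{M}G$, and then to $G$ itself, where $G$ is the difference of two candidate solutions.

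\textbf{Regularity step.} I extend $f$ via the Whitney operator to $\tilde{f}\in C^{1,\nu}(\R^m)$ with the usual bound $\|\partial_{\ux}^{\bj}\tilde{f}(\ux)\|\le c\,\dist(\ux,\partial\Omega)^{\nu-1}$ for $|\bj|=2$, so that $\|\La\tilde{f}(\ux)\|\lesssim\dist(\ux,\partial\Omega)^{\nu-1}$. The definition of $\mathfrak{m}^*(\partial\Omega)$ is tailored to control integrals weighted by powers of $\dist(\ux,\partial\Omega)$, and a Whitney-cube decomposition of $\Omega$ gives $\La\tilde{f}\in L^p(\Omega)$ with $p=\mathfrak{m}^*(\partial\Omega)/(1-\nu)>m$ precisely when $\nu>1-\mathfrak{m}^*(\partial\Omega)/m$. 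The Teodorescu-type transforms $\mathcal{T}_{\varphi,\psi}^{\dagger}$ and $\mathcal{T}_\varphi^l$, whose kernels are of Riesz type of order one, then map such $L^p$ data into the class $\Li(\R^m,\beta)$ for every $\beta<1-m(1-\nu)/\mathfrak{m}^*(\partial\Omega)$. Combined with the $C^{1,\nu}$ smoothness of $\tilde{f}$, formula \eqref{SFJP} yields $F\in\Li(\overline{\Omega_\pm},\beta)$, and since on each side of $\partial\Omega$ one has $\mathcal{M}F=(\mathcal{M}\tilde{f})\chi_\Omega-\mathcal{T}_\varphi^l(\La\tilde{f})$ up to the jump, the same reasoning gives $\mathcal{M}F\in\Li(\overline{\Omega_\pm},\beta)$.

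\textbf{Uniqueness step and main obstacle.} Let $F_1,F_2$ be two solutions from the prescribed class, and set $G=F_1-F_2$. Then $\La G=0$ in $\Omega_+\cup\Omega_-$, and both $G$ and $\mathcal{M}G$ have vanishing jumps across $\partial\Omega$; joined to the one-sided Lipschitz regularity, this makes them continuous on $\R^m$ and of class $\Li(\R^m,\beta)$ globally, with $\mathcal{M}G$ left $\varphi$-hyperholomorphic on $\R^m\setminus\partial\Omega$. The assumption $\beta>\dim_H(\partial\Omega)-m+1$ is exactly the Dolzhenko-type removability threshold from \cite{ADB}, in the refined Marcinkiewicz form of \cite{tamayo}, for the class $\Li(\R^m,\beta)$ against the operator $\hD$. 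Removability therefore extends $\mathcal{M}G$ to a $\varphi$-hyperholomorphic function on all of $\R^m$, and with $(\mathcal{M}G)(\infty)=0$ the Liouville theorem cited in \cite{BDS} gives $\mathcal{M}G\equiv 0$. Applying the same removability-plus-Liouville argument a second time, now to $G$ itself viewed as a $\Li(\R^m,\beta)$ element of $\ker\mathcal{M}$ off $\partial\Omega$, forces $G\equiv 0$. I expect the main obstacle to be the regularity step, specifically the bookkeeping that replaces the coarser exponent $d$ by $\mathfrak{m}^*(\partial\Omega)$ throughout the Whitney-cube $L^p$-estimate for $\La\tilde{f}$; this is exactly what pins down the admissible upper bound on $\beta$, and it is also what makes the two applications of Dolzhenko compatible with a single value of $\beta$ in the stated range.
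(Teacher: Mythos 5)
The paper does not actually prove this theorem: it is stated at the very end with only the remark that the argument ``can be worked out in much the same way'' as the smooth-boundary case, with the Dolzhenko theorem of \cite{ADB} replacing Painlev\'e and with the Marcinkiewicz-exponent bookkeeping taken from \cite{tamayo}. Your plan reconstructs exactly that intended route, and your regularity step is sound in outline: the Whitney bound $\|\La\tilde f\|\lesssim \dist(\cdot,\partial\Omega)^{\nu-1}$ together with the definition of $\mathfrak m^*(\partial\Omega)$ gives $\La\tilde f\in L^p(\Omega)$ for every $p<\mathfrak m^*(\partial\Omega)/(1-\nu)$, the condition $\nu>1-\mathfrak m^*(\partial\Omega)/m$ is precisely what makes $p>m$ attainable, and the Teodorescu mapping $L^p\to C^{0,1-m/p}$ yields membership in $\Li(\cdot,\beta)$ for all $\beta<1-m(1-\nu)/\mathfrak m^*(\partial\Omega)$. (A small slip: the kernel $K_{\varphi,\psi}$ of $\mathcal T_{\varphi,\psi}^{\dagger}$ is a Riesz potential of order \emph{two}, not one; only $\mathcal T_\varphi^l$ is of order one. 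This works in your favour, but say it correctly.) Your first uniqueness step --- global $\Li(\R^m,\beta)$ continuity of $\mathcal M G$, Dolzhenko removability of $\partial\Omega$ for $\ker\hD$ under $m-1+\beta>\dim_H(\partial\Omega)$, then Liouville with $(\mathcal M G)(\infty)=0$ --- is also the intended argument.

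The genuine gap is your second uniqueness step. You propose to apply ``the same removability-plus-Liouville argument'' to $G$ itself as an element of $\ker\mathcal M$ off $\partial\Omega$, but the Dolzhenko theorem of \cite{ADB} concerns monogenic functions, i.e.\ null solutions of a one-sided Dirac operator $\hD$; the operator $\mathcal M=\frac{\mu+\lambda}{2}(\cdot)\pD+\frac{3\mu+\lambda}{2}\pD(\cdot)$ mixes left and right actions, is not a Dirac operator, and no removability or Liouville theorem for $\ker\mathcal M$ is available in the sources you cite. The paper's own template for this step (in its smooth-boundary jump theorem) is different and is what you should adapt: once $\mathcal M G\equiv 0$ in $\R^m$, the continuity of $G$ across $\partial\Omega$ together with $G(\infty)=0$ makes $G$ bounded, say $\|G\|\le R$; one then writes $G$ on an arbitrary ball $B_\rho(\ux_0)$ by the Cauchy integral representation for $\La$ (in which the $\mathcal C_{\varphi,\psi}^{\dagger}(\mathcal M G)$ term drops out), differentiates under the integral sign, invokes the gradient estimate $\|\partial_{x_j}G(\ux_0)\|\le RC/\rho$ of \cite[Corollary 7.28]{GHS}, and lets $\rho\to\infty$ to conclude $G$ is constant, hence zero. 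Even this requires a word of justification that $\La G=0$ holds across $\partial\Omega$ (it does distributionally, since $\La G=\hD(\mathcal M G)$ and $\mathcal M G$ is the zero function on $\R^m$), a point you should make explicit rather than route through a nonexistent removability theorem for $\mathcal M$.
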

\section*{Acknowledgments}
The author wishes to express his gratitude to the financial support of the Postgraduate Study Fellowship of the  Secretar\'ia de
Ciencia, Humanidades, Tecnolog\'ia e Innovaci\'on (SECIHTI) (grant number 1043969). 

\section*{Declaration of Competing Interest}
The authors have no conflicts of interest to declare. 

\section*{Data availability}
Not applicable.

\section*{ORCID}
\noindent Daniel Alfonso Santiesteban: \url{https://orcid.org/0000-0003-0248-3942}\\
Ricardo Abreu Blaya: \url{
https://orcid.org/0000-0003-1453-7223}\\
Daniel Alpay: \url{https://orcid.org/0000-0002-7612-3598}

\end{document}